
\documentclass[onecolumn,11pt]{IEEEtran} 

\IEEEoverridecommandlockouts                              
\overrideIEEEmargins

\usepackage{graphics} 
\usepackage{epsfig} 
\usepackage{amsmath} 
\usepackage{amssymb,bm}  
\usepackage{color} 

\usepackage[ruled]{algorithm2e}
\usepackage{amsmath}

\newtheorem{theorem}{Theorem}

\newtheorem{lemma}{Lemma}
\newtheorem{definition}{Definition}

\newtheorem{corollary}{Corollary}

\title{\LARGE \bf
Minimum Number of Probes for Brain Dynamics Observability
}

\author{S\'ergio Pequito $^{\star}$ $\quad$ Paul Bogdan  $^{\diamond}$  $\quad$ George J. Pappas  $^{\star}$  
\thanks{This work was supported in part by the TerraSwarm Research Center,
one of six centers supported by the STARnet phase of the Focus Center
Research Program (FCRP) a Semiconductor Research Corporation program
sponsored by MARCO and DARPA. P.B. acknowledges the support by NSF 1453860 and 1331610 grants.}
\thanks{
$^{\star}$ Department of Electrical and Systems Engineering, School of Engineering and Applied Science, University of Pennsylvania}
\thanks{
$^{\diamond}$ USC Ming Hsieh Department of Electrical Engineering, Viterbi School of Engineering, University of Southern California}
}
\begin{document}

\maketitle

\begin{abstract}
In this paper, we address the problem of placing sensor probes in the brain such that the system dynamics' are generically observable. The system dynamics whose states can encode for instance the fire-rating of the neurons or their ensemble following a neural-topological (structural) approach, and the sensors are assumed to be dedicated, i.e., can only measure a state at each time. Even though the mathematical description of brain dynamics is (yet) to be discovered, we build on its observed fractal characteristics and assume that the model of the brain activity satisfies  fractional-order  dynamics. 

Although the sensor placement explored in this paper is particularly considering the observability of brain dynamics, the proposed methodology applies to any fractional-order linear system. Thus, the main contribution of this  paper is to  show how to place the minimum number of dedicated sensors, i.e., sensors measuring only a state variable,  to ensure generic observability in discrete-time fractional-order systems for a specified finite interval of time. Finally, an illustrative example of the main results is provided using  electroencephalogram (EEG) data.
\end{abstract}

\section{INTRODUCTION}

In the recent years, control  theoreticians have resorted  to  the tools available to study, analyze and design dynamical systems to better understand the dynamical behavior of the brain~\cite{NeuralNetControllingNodes,controlBrainNet2014,FrancisModeling,FranciSIAM,RuthTAC13,Franci2013,Drion}. Often, the dynamic model of the brain  assumes a behavior at the neuron level, that is then analyzed in interaction with different neurons to which it communicates with~\cite{FrancisModeling,controlBrainNet2014}. A commonly  used model for the neuron is the Hodgkin and Huxley, but several alternatives have been proposed.  Most of the models have the following similarity: a neuron \emph{fires} (i.e., beams a signal to neighboring neurons) if a certain \emph{threshold} (or, \emph{gain}) on its \emph{state} (or \emph{stimulus}) is reached. Nonetheless, the evolution of the state is believed to be  non-linear in general, with a time-varying  threshold. Therefore, several experimental  efforts concluded that  several regions in the brain exhibit specific properties. In particular, the non-linear dynamics in certain regions can be sequentially approximated by linear time-invariant systems, leading to a linear time-varying systems that can then be restricted to a finite possible dynamics due to some plasticity and invariant properties of the brain regions. Alternatively, the response to a stimulus  may exhibit a long-range memory behavior  which may indicate the presence of a fractional-order dynamics; for instance, the vesicular-ocular system, the fly motion sensitive neuron and neocortical neurons~\cite{Lundstrom}, just to name a few. 
Nonetheless, several other dynamical systems are described by similar dynamics and  sensing capabilities. For instance,  fractional-order dynamics include artificial pancreas~\cite{Ghorbani}, on-chip traffic regulation~\cite{B2012}, traffic dynamics across road arteries~\cite{B2011}, electrical and thermal machines, transmission and acoustics ~\cite{fracOrderdiscreteJournal}, just to name a few.  

Hereafter, instead of determining a specific model to the brain's dynamics given the neurons activity (or an ensemble of these) and their interactions, to which several studies have been conducted~\cite{Kaslik,Zhou2008973}, we assume that the brain dynamics can be described by  fractional-order dynamics. Then, we aim to provide a methodology to determine the minimum placement of \emph{dedicated} sensors, i.e., sensors that  measure a single state of the system, ensuring that  the system is  \emph{generically observable}; more precisely, by generically observable (also known as structurally observable -- see Section~\ref{prelim} for formal definition),  we mean that almost all parametrizations (that are unknown) for a given structure of interaction (that can be known or imposed by neural-topological analysis). The reason we focus in  determining such placement in the context of brain observability dynamics is threefold: (i) some probes (sensors) are invasive, i.e., require surgery to deploy the probes in the brain, which may be life threatening; (ii) nowadays deployment of probes in the brain neglects  entirely the brain dynamics; and (iii) data collected in these probes is commonly used to feedback into the system, therefore, a smaller collection of data leads to a faster response time with potential implications in neurodegenerative diseases such as Parkinson's and epilepsy~\cite{SpectrumFev15}.

To the best of the authors knowledge there are no systematic methods to do probe (sensor) placement in the brain. Instead, researchers have focused on collecting data from different regions of the brain and/or developing new sensors leveraging some of the new materials (some biodegradable that do not require the extraction after a period of time). On the other hand, if we consider dynamical systems, most sensor placement methodologies are either heuristic or greedy due to the combinatorial nature of the problem, see~\cite{PequitoJournal} and references therein. Structural systems theory studies the coupling between state variables rather than the specific strength of the coupling, suitable to the problems explored hereafter, since these are not available neither measurable directly, therefore they need to be estimated after data has been collected. The structural treatment of dynamical properties of switching systems and, in particular, the obtention of sufficient and necessary conditions to ensure \emph{structural observability}, i.e., structural observable for some non-empty window of time, have been explored in \cite{Hihi09,Liu20133531}. This approach contrasts with the one presented in~\cite{allerton}, where conditions to ensure  structural observability for all non-empty windows of time. In~\cite{allerton}, the deployment of dedicated sensors was explored, but  the sensor placement to ensure structural observability for some non-empty window of time, has not yet been addressed. The present work also extends~\cite{PequitoJournal,PequitoACC}, where the minimum placement of dedicated sensors to ensure structural observability was addressed for linear time-invariant systems. \hfill $\circ$

\textbf{Main Contribution: }  The main contributions of the paper is to  show how to place the minimum number of dedicated sensors  to ensure structural observability in discrete-time fractional-order systems for a specified finite interval of time. 

The rest of the this paper is organized as follows. First, in Section~\ref{probStatement}, we present the formal statements addressed in the present paper. We introduce and revise some preliminary concepts and results in Section \ref{prelim}. In Section~\ref{mainresults}, we present the main technical results, followed  by an illustrative example in Section \ref{illustrativeexample}. Finally, conclusions and  discussion avenues for further research are presented in Section \ref{conclusions}.


\section{Problem Statement}\label{probStatement}

In this section, we formally introduce the dedicated sensor placement for   discrete-time fractional order dynamics. Formally, this problem is described as follows.

\noindent $\mathcal P_1$  
Consider a model dynamics described by a linear discrete-time fractional-order
system as follows

\begin{equation}
\Delta^{\boldsymbol{\alpha}}x_{k+1}=\left[ \begin{array}{c}
\Delta^{\alpha_1} x^1_{k+1}\\
\Delta^{\alpha_2} x^2_{k+1}\\
\vdots\\
\Delta^{\alpha_n} x^n_{k+1}
\end{array}
\right]=\sum\limits_{j=0}^{k+1}A_j x_{k},\label{fracDyn}
\end{equation}
where $\alpha_i\in\mathbb{R}^{+}$($i=1,\ldots,n$), $A_0=A$ and $A_j=\text{diag}(${\small$- (-1)^{j+1} \binom{\alpha_1}{j+1}, - (-1)^{j+1} \binom{\alpha_2}{j+1},\ldots, - (-1)^{j+1} \binom{\alpha_n}{j+1}$}$)$, for brevity, we refer to the dynamics in~\eqref{fracDyn} as $\mathcal F(A;\boldsymbol{\alpha},K)$ which is associated with the representation of all matrices $A_j$, with  $j=0,\ldots,K$. Please note that $\alpha$ is a non-integer number and the term $\binom{\alpha}{j} = \frac{\Gamma(\alpha+1)}{\Gamma(j+1)\Gamma(\alpha-j+1)}$ is expressed via the Gamma function $\Gamma (x) = \int_{0}^{\infty} t^{x-1}e^{-t}dt$ \cite{Baleanu}.

Hereafter, given $\mathcal F(A;\boldsymbol{\alpha},K)$  we aim to determine the minimum number of dedicated sensors $\mathcal J$ required to ensure that the linear discrete-time fractional-order
system~\eqref{fracDyn} with measured output
\begin{equation}
y_k=\mathbb{I}_n^{\mathcal J}x_k,
\label{outputDiscrete}
\end{equation}
 is structurally observable, i.e., \begin{equation}
\begin{array}{cc}
\arg\min\limits_{\mathcal J\subset \{1,\ldots,n\}} & |\mathcal J|\\
\text{s.t.} & (\mathcal F(A;\boldsymbol{\alpha},K),\mathbb{I}_n^{\mathcal J}) \text{ is structurally observable,}
\end{array}
\label{optProbP2}
\end{equation}
where $\mathcal J\subseteq\{1,\ldots,n\}$ denotes the set of indices of the  dedicated sensors,  $\mathbb{I}_n^{\mathcal J}$ represents the subset of columns of the identity matrix $\mathbb{I}_n$ with indices in $\mathcal J$, and $ (\mathcal F(A;\boldsymbol{\alpha},K),\mathbb{I}_n^{\mathcal J})$  describes the system \eqref{fracDyn}-\eqref{optProbP2}.

Notice that  given $K,K'$ where $K'>K$, we may have two different solutions to~\eqref{fracDyn} with $\mathcal F(A;\boldsymbol{\alpha},K)$ and $\mathcal F(A;\boldsymbol{\alpha},K')$ for an arbitrary $k$.

\section{PRELIMINARIES AND TERMINOLOGY}\label{prelim}

In this section, we review  some notions of observability to discrete-time fractional-order systems, and their counterpart using structural systems theory~\cite{dionSurvey}.

We start by recalling that a solution in closed-form to~\eqref{fracDyn} can be determined and given as follows.

\begin{lemma}[\cite{fracOrderdiscrete,fracOrderdiscreteJournal}]
The solution to~\eqref{fracDyn} is given as follows:
\begin{equation}
x_{k+1}=G_{k+1}x_{0},
\label{fracDynSol}
\end{equation}

\noindent where

\[
G_k=\left\{\begin{array}{cl}
A & \text{for } k=0,\\
\sum\limits_{j=0}^{k-1}A_jG_{k-1-j}& \text{for } k\ge 1.
\end{array}\right.
\]

\hfill $\diamond$
\end{lemma}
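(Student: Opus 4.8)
The plan is to prove the closed form by complete (strong) induction on $k$, after first rewriting the implicit dynamics~\eqref{fracDyn} as an explicit forward recursion for $x_{k+1}$ in terms of the entire past trajectory $x_0,\dots,x_k$. Concretely, I would first expand the fractional difference operator on the left-hand side of~\eqref{fracDyn} using its Gr\"unwald--Letnikov series, isolate the leading term $x^i_{k+1}$, and move every remaining term to the right-hand side. A direct comparison of the resulting binomial coefficients $-(-1)^{j+1}\binom{\alpha_i}{j+1}$ with the definition of the diagonal matrices $A_j$ shows that this rearrangement is exactly $x_{k+1}=\sum_{j=0}^{k}A_j x_{k-j}$, i.e. the state at time $k+1$ is a discrete convolution of the matrices $A_j$ against the past states. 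This identity is the workhorse of the argument and should be established carefully as the first step, since everything rests on these coefficients matching the stated $A_j$.

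With the explicit recursion in hand, I would establish $x_{k+1}=G_{k+1}x_0$ inductively. For the base case I would verify the first step directly: the recursion gives $x_1=A_0 x_0 = A x_0$, which must coincide with $G_1 x_0$; computing $G_1=\sum_{j=0}^{0}A_j G_{-j}=A_0 G_0$ from the defining recursion shows that the initialization must be $G_0=\mathbb{I}_n$ in order to obtain $G_1=A$, and this normalization is the point to pin down at the outset. For the inductive step I would assume $x_m=G_m x_0$ for all $m\le k$ --- note this must be the \emph{strong} form of induction, because the convolution makes $x_{k+1}$ depend on all earlier states rather than on $x_k$ alone --- and substitute the hypothesis into $x_{k+1}=\sum_{j=0}^{k}A_j x_{k-j}$ to obtain $x_{k+1}=\big(\sum_{j=0}^{k}A_j G_{k-j}\big)x_0$. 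Recognizing the parenthesized matrix as precisely $G_{k+1}$ via the defining recursion of $G_k$ then closes the induction.

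The main obstacle I anticipate is bookkeeping rather than conceptual difficulty: one must be scrupulous about the two distinct index shifts that appear --- the shift $l\mapsto j+1$ that converts the Gr\"unwald--Letnikov summation index into the $A_j$ index, and the shift $k\mapsto k+1$ that aligns the defining recursion $G_k=\sum_{j=0}^{k-1}A_j G_{k-1-j}$ with the convolution $\sum_{j=0}^{k}A_j G_{k-j}$ produced in the inductive step. A secondary point demanding care is the initialization convention for $G_0$, which has to be fixed so that the first step $G_1=A$ is consistent with the explicit dynamics; getting this normalization right is what guarantees the recursion reproduces the trajectory for every $k$.
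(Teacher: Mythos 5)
The paper itself offers no proof of this lemma---it is imported verbatim, with citations, from the discrete fractional-order literature---so your argument can only be measured against the standard derivation in those references, and it matches it: rewrite the Gr\"unwald--Letnikov difference as the explicit convolution $x_{k+1}=\sum_{j=0}^{k}A_j x_{k-j}$ (note that the right-hand side of~\eqref{fracDyn} as printed, $\sum_{j=0}^{k+1}A_j x_k$, is itself a typo for exactly this convolution, which you implicitly repair), and then close a strong induction using the index-shifted recursion $G_{k+1}=\sum_{j=0}^{k}A_j G_{k-j}$. Your insistence on \emph{strong} induction is exactly right, since the convolution couples $x_{k+1}$ to the entire past trajectory rather than to $x_k$ alone.

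Two bookkeeping points, one of which you handle correctly and one of which you half-inherit from the paper. First, the initialization: as printed, the lemma sets $G_0=A$, which is inconsistent with its own recursion and claimed solution (it would give $x_1=G_1x_0=A_0G_0x_0=A^2x_0$, whereas the dynamics give $x_1=A_0x_0$); the correct normalization is $G_0=\mathbb{I}_n$, as in the cited sources, and you correctly pin this down rather than reproduce the misprint. Second, a caveat your coefficient-matching step will surface if carried out in full: isolating $x^i_{k+1}$ from the Gr\"unwald--Letnikov expansion produces, at index $j=0$, the coefficient $\mathrm{diag}(\alpha_1,\ldots,\alpha_n)$ in addition to $A$, so the convolution actually runs with $A_0=A+\mathrm{diag}(\alpha_1,\ldots,\alpha_n)$; the paper's flat statement $A_0=A$ is another misprint, and your base-case line $x_1=A_0x_0=Ax_0$ inherits it. This does not invalidate your induction, which is agnostic to what the matrices $A_j$ are once the convolution identity is granted, but since you correctly flagged the coefficient match as the workhorse step to be established ``carefully,'' you should state $A_0$ correctly there.
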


The notion of \emph{observability} for  discrete-time fractional-order systems is revisited in the next definition.

\begin{definition}
The   fractional-order system modeled by \eqref{fracDyn} and   \eqref{outputDiscrete} is \emph{observable} at time $k=0$ if and only if there exists some $K>0$ such that the state $x_0$ at time $k=0$ can be uniquely determined from the knowledge of the measured output $y_k$ with $k=1,\ldots,K$. \hfill $\diamond$
\end{definition}

Of a particular interest is the notion of the \emph{observability matrix} that is related with the system's observability.

\begin{definition}[\cite{fracOrderdiscrete,fracOrderdiscreteJournal}]
The observability matrix  associated with the discrete-time fractional order dynamics for a given time $k$ as described in~\eqref{fracDyn}, and the measured output 

\begin{equation}
y_k=Cx_k,
\label{outputDiscreteFull}
\end{equation}

\noindent is given as follows:
\[
\mathcal O_k^f=[(CG_0)^{\intercal} \ (CG_1)^{\intercal} \ \ldots \ (CG_{k-1})^{\intercal}]^{\intercal}.
\]

\hfill $\diamond$
\end{definition}

Additionally, we have the following result.

\begin{theorem}
The system described by \eqref{fracDyn}-\eqref{outputDiscreteFull}  is observable if and only if there exists a finite time $K$ such that $\text{rank } (\mathcal O_K^f)=n$. \hfill $\diamond$
\label{observabilityDynFrac}
\end{theorem}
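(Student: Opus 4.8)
The plan is to reduce the observability question to a standard linear-algebra injectivity condition, using the closed-form solution from the Lemma. First I would use the closed-form expression \eqref{fracDynSol} to write each state as $x_k = G_k x_0$, so that, substituting into the output equation \eqref{outputDiscreteFull}, every measured sample becomes a linear function of the initial condition, $y_k = C x_k = C G_k x_0$. Thus each output carries information about $x_0$ only through the matrix $C G_k$.

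Next I would stack the available output samples up to a horizon $K$ into a single vector and identify the resulting coefficient matrix with the observability matrix. Concretely,
\[
\begin{bmatrix} y_0 \\ \vdots \\ y_{K-1} \end{bmatrix}
= \begin{bmatrix} C G_0 \\ \vdots \\ C G_{K-1} \end{bmatrix} x_0
= \mathcal O_K^f\, x_0 ,
\]
so the entire measurement record up to time $K$ is precisely the image of $x_0$ under the linear map $x_0 \mapsto \mathcal O_K^f x_0$. The central step is then the elementary observation that, for a fixed $K$, the state $x_0 \in \mathbb{R}^n$ can be uniquely recovered from the stacked output if and only if this map is injective, equivalently $\ker \mathcal O_K^f = \{0\}$, equivalently $\mathcal O_K^f$ has full column rank $n$. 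Indeed, uniqueness fails exactly when two distinct initial states yield identical output records, i.e.\ when their difference is a nonzero element of the kernel. Quantifying over the horizon as in the definition of observability then gives the claimed equivalence: the system is observable if and only if there exists a finite $K$ for which $\mathrm{rank}(\mathcal O_K^f)=n$.

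I expect the only delicate point to be the handling of the finite-time, existential aspect of the statement. Unlike the classical linear time-invariant case, where the Cayley--Hamilton theorem caps the useful horizon at $K=n$ and the observability matrix may be truncated there, the matrices $G_k$ here are generated by the convolution-type recursion of the Lemma rather than by powers of a single system matrix. Consequently no uniform truncation bound is available a priori, and the rank of $\mathcal O_K^f$ can keep increasing for $K>n$. To close this gap I would note that appending the block row $C G_K$ can only add rows, so $\mathrm{rank}(\mathcal O_K^f)$ is nondecreasing in $K$ and bounded above by $n$; hence its supremal value is attained at some finite horizon, and observability is decided there. This justifies phrasing the theorem as the existence of \emph{some} finite $K$ rather than a fixed one, and completes both implications of the equivalence.
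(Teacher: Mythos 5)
Your proof is sound, but there is nothing in the paper to compare it against: Theorem~\ref{observabilityDynFrac} is stated without proof, as an imported result following the definition of $\mathcal O_k^f$, which is attributed to \cite{fracOrderdiscrete,fracOrderdiscreteJournal}. Your argument is essentially the standard one from that literature: by the closed-form solution of the Lemma every output sample is a linear function of $x_0$, stacking the samples over a horizon $K$ realizes the measurement record as $\mathcal O_K^f x_0$, and unique recoverability of $x_0$ at a fixed horizon is precisely injectivity of this linear map, i.e., $\ker \mathcal O_K^f=\{0\}$, i.e., $\mathrm{rank}(\mathcal O_K^f)=n$; quantifying existentially over $K$ gives both implications. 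Two remarks. First, you silently repair the paper's internally inconsistent indexing: as printed, the Lemma has $G_0=A$ and $x_{k+1}=G_{k+1}x_0$, under which your identity $x_k=G_kx_0$ fails at $k=0$ and the first block $CG_0=CA$ of $\mathcal O_K^f$ corresponds to no measured sample (the observability definition uses $y_1,\ldots,y_K$). The convention your stacking actually uses, $G_0=\mathbb{I}_n$ and $x_k=G_kx_0$, is the one in the cited source, and it would strengthen the write-up to state this normalization explicitly rather than rely on it tacitly. Second, your closing paragraph correctly isolates the genuinely fractional-order content: because the $G_k$ are generated by a convolution recursion involving $A$ and the diagonal matrices $A_j$, rather than being powers of a single matrix, no Cayley--Hamilton truncation at $K=n$ is available, so the finite-$K$ existential phrasing is essential; your monotonicity-plus-boundedness observation (rank nondecreasing in $K$ and capped at $n$, hence its supremum is attained at some finite horizon) handles this cleanly, though strictly speaking the forward implication already obtains a finite $K$ directly from the paper's definition of observability, so that step is a clarification rather than a logical necessity.
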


Given that the precise numerical values of the network parameters are generally not available for the large-scale systems of interest, a natural direction is to consider structured systems~\cite{dionSurveyKyb} based reformulations of the above topology design problems, which we pursue in this paper. Representative work in structured systems theory may be found in \cite{Lin_1974,largeScale,Reinschke:1988,Murota:2009:MMS:1822520}, see also the survey \cite{dionSurvey} and references therein. The main idea is to reformulate and  study  an equivalent class of systems for which system-theoretic properties are investigated based on  the location of zeroes/non-zeroes of the state space representation matrices. Properties such as observability, in this framework, referred  as \emph{structural observability}, initially formalized for linear time-invariant systems~\cite{Lin_1974}. In fact, structural observability is a \emph{generic} property, i.e., almost all (with respect to the Lebesgue measure) realizations satisfying a given structure are observable~\cite{Reinschke:1988}. Given the similarity between observability criteria of the linear-time invariant and the discrete-time fractional order dynamics, one can readily extend the notion of structural observability to the latter. More precisely,  a pair $(\mathcal F(A;\boldsymbol{\alpha},K),C)$ is said to be structurally observable if there exists a pair $(\mathcal F(A';\boldsymbol{\alpha},K),C')$ with the same structure as $(\mathcal F(A;\boldsymbol{\alpha},K),C)$, i.e., same locations of zeroes and non-zeroes, such that $(\mathcal F(A';\boldsymbol{\alpha},K),C')$ is observable. By density arguments \cite{Reinschke:1988}, it may be shown that if a pair $(\mathcal F(A;\boldsymbol{\alpha},K),C)$ is structurally observable, then almost all (with respect to the Lebesgue measure) pairs with the same structure as $(\mathcal F(A;\boldsymbol{\alpha},K),C)$ are observable. In essence, structural observability is a property of the structure of the pair $(\mathcal F(A;\boldsymbol{\alpha},K),C)$ and not the specific numerical values.

Now, we associate with each subsystem a directed graph (digraph) $\mathcal D\equiv\mathcal D(A,\mathbb{I}^{\mathcal J}_n)=(\mathcal V,\mathcal E)$, referred to as \emph{system digraph}, with vertex set $\mathcal V$ and edge set $\mathcal E$, where $\mathcal V=\mathcal X\cup \mathcal Y$ with $\mathcal X=\{x^1,\ldots, x^n\}$ and $\mathcal Y=\{y^1,\ldots, y^{|\mathcal J|}\}$  represents the \emph{state} and \emph{output vertices}, respectively. In addition,  $\mathcal E=\mathcal E_{\mathcal X,\mathcal X}\cup \mathcal E_{\mathcal X,\mathcal Y}$ where $\mathcal E_{\mathcal X,\mathcal X}=\{(x^{k},x^j): A_{jk}\neq 0\}$ and   $\mathcal E_{\mathcal X,\mathcal Y}=\{(x^{j},y^j):  j\in \mathcal J \}$ represents the \emph{state edges} and \emph{output edges}, respectively. Similarly, we can define a \emph{state digraph} $\mathcal D(A)=(\mathcal X,\mathcal E_{\mathcal X,\mathcal X})$. Later, the matrix $A$ is given in terms of $A=A_1\lor \ldots \lor A_m$, where $\lor$ corresponds to the entry-wise  operation where if at least one of the entries is non-zero, then it provides a non-zero entry, and zero otherwise. A state vertex is said to be \emph{non-accessible} by an output vertex if there exists no directed path, i.e., a sequence of directed edges where every edge ends in a vertex that is starting of another edges and no vertex is used twice,  the state vertex to any output vertex. Additionally, we need to introduce the notion of a bipartite graph $\mathcal B(M)$ associated with a $m_1\times m_2$ matrix $M$ given by  $\mathcal B(M)=(\mathcal R,\mathcal C,\mathcal E_{\mathcal C,\mathcal R})$, where $\mathcal R=\{r_1,\ldots,r_{m_1}\}$ and $\mathcal C=\{c_1,\ldots,c_{m_2}\}$ correspond to the labeling row vertices and column vertices, respectively; further, $\mathcal E_{\mathcal C,\mathcal R}=\{(c_{j},r_i): M_{ij}\neq 0\}$. The bipartite graph is  an undirected graph with vertex set given by the union of the partition sets $\mathcal C$ and $\mathcal R$, which we refer to as left and right vertex sets, respectively. A matching $M\subset \mathcal E_{\mathcal C,\mathcal R}$ is a collection of edges that have no vertices in common. A maximum matching is a matching with maximum cardinality among all possible matchings. For ease of reference, if a vertex in the left and right vertex set does not belong to an edge in a maximum matching we refer to as a  right- and left-unmatched vertex, respectively. In addition, we can consider weights associated with the edges in a bipartite graph, so  we can consider the problem of determining the maximum matching with the minimum sum of the weights, that we refer to as the \emph{minimum weight maximum matching}.  A digraph $\mathcal{D}_S=(\mathcal V_S,\mathcal E_S)$ is a \emph{subgraph} of $\mathcal{D}=(\mathcal V,\mathcal E)$ if $\mathcal V_S\subseteq \mathcal V$ and $\mathcal E_S\subseteq \mathcal E$. Finally, a \emph{strongly connected component} (SCC) is a maximal subgraph (there is no other subgraph, containing it, with the same property) $\mathcal{D}_S=(\mathcal V_S,\mathcal E_S)$ of $\mathcal{D}$ such that for every $u,v \in \mathcal V_S$ there exists a path from $u$ to $v$ and from $v$ to $u$. 
We can create a \textit{directed acyclic graph} (DAG) by visualizing each SCC as a virtual node, where there is a directed edge between vertices belonging to two SCCs if and only if there exists a directed edge connecting the corresponding SCCs in the digraph $\mathcal D=(\mathcal V,\mathcal E)$, the original digraph. The DAG associated with $\mathcal{D}(A)$ can be computed efficiently in $\mathcal{O}(|\mathcal V|+|\mathcal E|)$~\cite{Cormen}. The SCCs in the DAG may be further categorized as follows.

\begin{definition}\label{linkedSCC}
\cite{PequitoJournal} An SCC is said to be linked if it has at least one incoming/outgoing edge from another SCC. In particular, an SCC is \textit{non-bottom linked} if it has no outgoing edges from its vertices to the vertices of another SCC.
\hfill $\diamond$
\end{definition}

Finally,  consider a $m_1\times m_2$ matrix $M$,  and let $\mathbb{M}=\{P\in\mathbb{R}^{m_1\times m_2}: \  P_{ij}=0 \text{ if } M_{ij}=0\}$, then the \emph{generic rank} (g-rank) of $M$ is given by
$
\text{g-rank}(M)=\max\limits_{P\in\mathbb{M}}\text{ rank}(P).
$

Now, we revise the structural observability necessary and sufficient conditions for linear switching systems. 

\begin{theorem}[\cite{Liu20133531}]
Consider a linear continuous-time switching system

\begin{equation}
\dot x(t)=A_{\sigma(t)}x(t),
\label{timeVarSys}
\end{equation}
where $\sigma: \mathbb{R}^+\rightarrow  M\equiv \{1,\ldots, m\}$ is a switching signal, and $x(t)\in\mathbb{R}^n$ the state of the system at the instance of time $t$.  In addition, let the measured output to be given by 
\begin{equation}
y(t)=Cx(t).
\label{outputContinuous}
\end{equation}
The linear continuous-time switching system \eqref{timeVarSys}-\eqref{outputContinuous} is structurally observable if and only if the following two conditions hold:
\begin{enumerate}
\item[(i)] $\mathcal D(A_1\lor \ldots \lor A_m, \mathbb{I}^{\mathcal J}_n)$ has no non-accessible state vertex;
\item[(ii)] $\text{g-rank}\left([ A_1 , \ldots, A_m,\mathbb{I}_n^{\mathcal J}]\right)=n$. \hfill $\diamond$ 
\end{enumerate}
\label{necSufCondStructSwitching}
\end{theorem}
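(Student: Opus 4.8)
The plan is to prove both directions by decoupling structural observability of the switched pair into a connectivity requirement (captured by (i)) and an algebraic rank requirement (captured by (ii)), and then to close with a genericity argument. First I would recall the standard algebraic characterization of observability for a switched linear system: the pair is observable for a suitable switching signal if and only if the largest subspace contained in $\ker C$ and invariant under every $A_i$ is trivial; equivalently, stacking the blocks $C,\ CA_{i_1},\ CA_{i_1}A_{i_2},\ \ldots$ over all finite mode words $i_1 i_2\cdots i_\ell$ (of length at most $n-1$) yields a matrix of rank $n$. Because the statement is \emph{structural}, it suffices for each claimed implication to exhibit a single parametrization with the prescribed zero/non-zero pattern that attains the desired rank, since the set of rank-deficient parametrizations is then the zero set of a nonzero polynomial and hence has Lebesgue measure zero \cite{Reinschke:1988}.

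For necessity I would argue the contrapositive on each condition in turn. If (i) fails, some state vertex $x^j$ is non-accessible in $\mathcal D(A_1\lor\cdots\lor A_m,\mathbb{I}_n^{\mathcal J})$; then no directed path from $x^j$ reaches an output vertex through \emph{any} combination of modes, so the coordinate subspace spanned by the non-accessible vertices lies in $\ker C$ and is invariant under every $A_i$ for all realizations, forcing the stacked observability matrix to drop rank identically in the parameters. If (ii) fails, $\text{g-rank}([A_1,\ldots,A_m,\mathbb{I}_n^{\mathcal J}])<n$, which by the duality between maximum matchings and minimum vertex covers in $\mathcal B([A_1,\ldots,A_m,\mathbb{I}_n^{\mathcal J}])$ exposes a structural deficiency (a dilation) common to every numerical instance; I would then show this deficiency propagates through the block rows $CA_w$, again keeping the rank below $n$ for all parametrizations. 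Thus each of (i) and (ii) is necessary.

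The sufficiency direction is the crux and the step I expect to be the main obstacle. Assuming (i) and (ii), I would construct an explicit parametrization and a mode word witnessing rank $n$. Condition (ii) supplies, via a maximum matching in $\mathcal B([A_1,\ldots,A_m,\mathbb{I}_n^{\mathcal J}])$, a generic full column rank of the first layer of the stack; condition (i) guarantees that every remaining state is joined, through the union graph, by a directed path to an output vertex, so that iterating the appropriate $A_i$ along such a path feeds each state into some later block row $CA_w$. The delicate part is to choose the numerical weights so that the nonzero contributions selected by the matching and by the accessibility paths survive in the determinant expansion of an $n\times n$ submatrix without cancelling, i.e., to certify that this determinant is a nonzero polynomial rather than one that vanishes identically. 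I would handle this by assigning algebraically independent (generically chosen) values to the free entries, so that distinct monomials cannot cancel, mirroring the cactus/Lin-type construction used for linear time-invariant systems but now threaded across the several mode matrices and their union graph. Once a single observable realization is exhibited, the density argument upgrades it to almost all parametrizations with the same structure, completing the proof.
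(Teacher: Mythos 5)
You should first note a mismatch with the premise of the exercise: the paper does not prove this theorem at all. It is imported verbatim from \cite{Liu20133531} (the structural controllability result for switched linear systems, dualized to observability) and used as a black box, so there is no in-paper proof to compare against and your attempt must stand on its own. Judged on its own, your necessity direction is essentially sound: the non-accessible state vertices span a coordinate subspace that is invariant under every $A_i$ and contained in $\ker C$ for all realizations, and a generic rank deficiency in the mode/output matrix yields, for every realization, a nonzero $v$ annihilated by $C$ and by each block $CA_{i_1}\cdots A_{i_\ell}$ of the word-indexed observability stack. One caution: as printed, condition (ii) concatenates columns of the $A_i$, which is the controllability orientation; the observability version needs the transposes (consistent with Algorithm~1 in the paper working with $G_i^{\intercal}$), and your phrase that the deficiency ``propagates through the block rows $CA_w$'' skates over exactly this orientation issue.

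The genuine gap is in sufficiency, precisely where you flag ``the main obstacle.'' Assigning algebraically independent values to the free parameters does not, by itself, certify that some $n\times n$ minor of the stacked observability matrix is nonzero: with transcendental entries the determinant is nonzero if and only if it is not identically zero \emph{as a polynomial}, so your non-cancellation step is circular unless you exhibit at least one nonvanishing monomial. Producing that witness from conditions (i) and (ii) is the actual content of the theorem, and in \cite{Liu20133531} it occupies the bulk of the proof: a generalization of Lin's cactus covering to several mode matrices, together with an argument that a spanning structure in the union digraph $\mathcal D(A_1\lor \ldots \lor A_m)$ — whose paths may mix edges from different modes — can be realized by products $CA_{i_1}\cdots A_{i_\ell}$ along suitably chosen mode words, with a matching-plus-paths family selecting a permutation term that survives in the minor. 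Your sketch names the right ingredients (a maximum matching for the first layer, accessibility paths feeding the remaining states into later block rows) but never constructs the words or the combinatorial witness, so as written the sufficiency direction is a plan rather than a proof.
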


In addition, consider the following intermediate results.

 \begin{lemma}[\cite{PequitoJournal}]
 $\mathcal D(A_1\lor \ldots \lor A_m, \mathbb{I}^{\mathcal J}_n)$ has no non-accessible state vertex if and only if there exits an edge  to an output vertex in  $\mathcal D(A_1\lor \ldots \lor A_m, \mathbb{I}^{\mathcal J}_n)$ from a state vertex in each non-bottom linked SCC of the DAG associated with  $\mathcal D(A_1\lor \ldots \lor A_m)$. \hfill $\diamond$ 
 \label{lemma2}
\end{lemma}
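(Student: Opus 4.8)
The plan is to prove the equivalence by exploiting the structure of the DAG obtained by condensing the state digraph $\mathcal D(A_1\lor \ldots \lor A_m)$ into its SCCs. The guiding observation is that a non-bottom linked SCC is precisely a \emph{sink} of this DAG (it has no outgoing edge to any other SCC), and that a state vertex reaches an output vertex if and only if, travelling downstream in the DAG, it arrives at a sink SCC from which an output edge emanates. I would first record two structural facts to be used throughout: output vertices have no outgoing edges (by the definition of $\mathcal E_{\mathcal X,\mathcal Y}$, every output edge is of the form $(x^j,y^j)$), so any directed path from a state vertex to an output vertex ends with a single output edge; and the set of state vertices reachable from a vertex in a sink SCC is contained in that SCC, since leaving it would require an outgoing edge to another SCC. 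With these in hand I establish each implication separately.

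For the \emph{necessity} direction ($\Rightarrow$), assume $\mathcal D(A_1\lor \ldots \lor A_m,\mathbb{I}^{\mathcal J}_n)$ has no non-accessible state vertex. Fix a non-bottom linked SCC $S$ and any state vertex $v\in S$. By assumption there is a directed path from $v$ to some output vertex $y$, consisting of state vertices followed by a single output edge. Because $S$ has no outgoing edge to another SCC, every state vertex reachable from $v$ lies in $S$; in particular the last state vertex on the path, say $u$, belongs to $S$, and the terminal output edge $(u,y)$ exhibits an edge to an output vertex emanating from a state vertex of $S$. As $S$ was an arbitrary non-bottom linked SCC, the condition holds.

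For the \emph{sufficiency} direction ($\Leftarrow$), assume every non-bottom linked SCC contains a state vertex with an edge to an output vertex, and let $v$ be an arbitrary state vertex lying in some SCC $S_v$. Following a maximal directed walk in the DAG starting at $S_v$, acyclicity and finiteness guarantee termination at a sink, i.e., a non-bottom linked SCC $S^{\star}$; this induces a directed path in $\mathcal D(A_1\lor \ldots \lor A_m,\mathbb{I}^{\mathcal J}_n)$ from $v$ to some vertex $w\in S^{\star}$. By hypothesis some $u\in S^{\star}$ has an output edge $(u,y)$, and since $S^{\star}$ is strongly connected there is a path from $w$ to $u$ inside $S^{\star}$. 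Concatenating these segments yields a directed path from $v$ to $y$, so $v$ is accessible; as $v$ was arbitrary, there is no non-accessible state vertex.

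The main obstacle I anticipate is not either logical step per se but making the correspondence between reachability in $\mathcal D(A_1\lor \ldots \lor A_m,\mathbb{I}^{\mathcal J}_n)$ and reachability in its condensation fully rigorous, together with cleanly disposing of degenerate cases: self-loops producing singleton SCCs, and isolated SCCs (no incoming and no outgoing edges), which are sinks and must therefore count as non-bottom linked so that the sufficiency argument covers them. Once the sink-SCC characterization of non-bottom linked components and the principle that a path to an output must exit through the sink reached downstream are pinned down, both directions follow from elementary DAG reachability arguments.
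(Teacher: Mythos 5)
Your proof is correct; the paper states Lemma~\ref{lemma2} without proof (citing \cite{PequitoJournal}), and your argument --- identifying the non-bottom linked SCCs with the sinks of the condensation DAG, then running the two reachability implications (necessity: a path from a vertex of a sink SCC to an output cannot leave that SCC through state edges, so it must terminate in an output edge attached to that SCC; sufficiency: by acyclicity and finiteness every vertex reaches some sink SCC of the condensation, and strong connectivity within that SCC carries the path to the vertex bearing the output edge) --- is precisely the standard argument underlying the cited result, including the correct observations that output vertices have no outgoing edges and that adding the output edges does not alter the SCC structure of the state digraph. The only point worth tightening is in the sufficiency direction: your concatenation of path segments yields a walk, which may repeat vertices, while the paper's definition of a directed path forbids repetition; this is disposed of by noting that every directed walk from $v$ to $y$ contains a directed path from $v$ to $y$, so accessibility via walks and via paths coincide.
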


  \begin{lemma}[\cite{dionSurveyKyb}]
There exists a maximum matching of $\mathcal B(M)$ with size $n$ if and only if $\text{g-rank}(M)=n$.\hfill $\diamond$
\label{lemma1} 
\end{lemma}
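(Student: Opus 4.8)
The plan is to establish the stronger identity that the size of a maximum matching of $\mathcal{B}(M)$ equals $\text{g-rank}(M)$, from which the stated equivalence for the value $n$ follows immediately by specialization. The bridge between the combinatorial quantity (matching size) and the algebraic one (generic rank) is the Leibniz expansion of the determinant, which I will use to turn a statement about transversals of nonzero entries into a statement about non-vanishing minors.

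First I would recall the submatrix characterization of generic rank: $\text{g-rank}(M) \ge r$ if and only if some $r \times r$ submatrix $M'$, obtained by selecting $r$ rows and $r$ columns, has a determinant that, regarded as a polynomial in the free entries of the pattern class $\mathbb{M}$, is \emph{not} identically zero. Indeed, for any realization $P \in \mathbb{M}$ one has $\text{rank}(P) \ge r$ iff some $r \times r$ minor of $P$ is nonzero, and $\max_{P \in \mathbb{M}} \text{rank}(P)$ is attained on the dense open set where a minor of maximal structural size is nonzero, the vanishing locus of a nontrivial polynomial having Lebesgue measure zero.

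Next, for a fixed choice of rows $\{i_1,\dots,i_r\}$ and columns $\{j_1,\dots,j_r\}$, I would expand the submatrix determinant by the Leibniz formula as $\sum_{\sigma} \text{sgn}(\sigma)\prod_{t} P_{i_t, j_{\sigma(t)}}$, the sum ranging over permutations $\sigma$ of $\{1,\dots,r\}$. The crucial observation is that distinct permutations index distinct monomials in the variables $\{P_{ij}\}$, since the set of positions $\{(i_t,j_{\sigma(t)})\}$ determines $\sigma$; hence no cancellation can occur among the terms. Consequently the determinant polynomial is not identically zero if and only if there exists a permutation $\sigma$ with $M_{i_t, j_{\sigma(t)}} \ne 0$ for all $t$ — equivalently, if and only if $M$ has $r$ nonzero entries lying in pairwise distinct rows and pairwise distinct columns.

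Finally, I would translate this last condition into the language of $\mathcal{B}(M)$: by the definition $\mathcal{E}_{\mathcal{C},\mathcal{R}} = \{(c_j, r_i) : M_{ij} \ne 0\}$, a set of $r$ nonzero entries with pairwise distinct rows and columns is exactly a matching of size $r$. Taking $r$ maximal on both sides yields $\text{g-rank}(M)$ equal to the size of a maximum matching of $\mathcal{B}(M)$, and setting this common value to $n$ gives the lemma. I expect the main obstacle to be the rigorous justification of the genericity step, namely that non-vanishing of the maximal-size minor polynomial forces the almost-everywhere rank to equal its structural size; for this I would invoke the standard fact that a nonzero multivariate polynomial vanishes only on a measure-zero set, precisely the density argument already relied upon for structural observability earlier in the text.
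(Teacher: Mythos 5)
Your proof is correct, but note that the paper itself contains no proof of this lemma to compare against: it is imported verbatim from the structural-systems literature (\cite{dionSurveyKyb}) and used as a black box. Your argument is the standard classical one -- essentially Edmonds' theorem that generic rank equals term rank -- and every step is sound: the submatrix characterization of $\text{g-rank}$, the Leibniz expansion of an $r\times r$ minor, the observation that distinct permutations index distinct monomials so that no cancellation can occur, and the identification of a nonzero transversal with a matching of size $r$ in $\mathcal B(M)$. Two points are worth making fully explicit. First, the no-cancellation step hinges on the entries of $P$ at the nonzero positions of $M$ being algebraically \emph{independent} indeterminates, one free variable per position; this is exactly what the pattern class $\mathbb{M}$ defined in the paper provides, but the argument would break for parametrized patterns with repeated or dependent entries, so it is good practice to flag where independence enters. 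Second, in your genericity step, passing from ``some maximal-size minor polynomial is not identically zero'' to ``the maximum rank is attained on a set of full measure'' uses that the exceptional set is the union of the vanishing loci of \emph{finitely many} nonzero polynomials, hence a finite union of measure-zero sets; you use this implicitly and should state it in one line. With those two remarks added, your stronger identity $\text{g-rank}(M)=|M^{\ast}|$ for a maximum matching $M^{\ast}$, specialized to the value $n$, is a complete and self-contained proof of the lemma -- indeed more informative than the paper, which only cites it.
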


\section{Main Results}\label{mainresults}

In this section, we present the main results of this paper. More precisely, we provide the solution to $\mathcal P_1$ that can be determined using Algorithm~\ref{mainAlgorithmNew}. Algorithm~\ref{mainAlgorithmNew} is shown to be correct and with polynomial complexity in Theorem~\ref{TheoremCorrectness}. 

First, we provide necessary and sufficient conditions to ensure structural observability of 
discrete time fractional-order dynamical systems.

\begin{theorem}
A discrete time fractional-order dynamical system~\eqref{fracDyn}-\eqref{outputDiscrete} is structurally observable at time $K$ if and only if the following two conditions hold:
\begin{enumerate}
\item[(i)] $\mathcal D(G_0\lor G_1\lor \ldots \lor G_K, \mathbb{I}^{\mathcal J}_n)$ has no non-accessible state vertex;
\item[(ii)] $\text{g-rank}\left([G_0 , G_1,\ldots, G_K,\mathbb{I}_n^{\mathcal J}]\right)=n$,
\end{enumerate}
where $G_i$ is as described in~\eqref{fracDynSol}.
\hfill $\diamond$ 
\label{fracToSwitch}
\end{theorem}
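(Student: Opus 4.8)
The plan is to reduce the claim to a \emph{generic rank} statement about the fractional observability matrix $\mathcal O_K^f$ and then recognize that this statement is structurally identical to the switching-system characterization of Theorem~\ref{necSufCondStructSwitching}, with the subsystem matrices $A_1,\ldots,A_m$ played by the closed-form matrices $G_0,\ldots,G_K$. First I would invoke Theorem~\ref{observabilityDynFrac}: for a fixed parametrization the pair is observable at time $K$ if and only if $\text{rank}(\mathcal O_K^f)=n$, where the block rows of $\mathcal O_K^f$ are the $CG_i$ with $C=\mathbb I_n^{\mathcal J}$. By the definition of structural observability together with the standard density argument \cite{Reinschke:1988}, the pair $(\mathcal F(A;\boldsymbol\alpha,K),\mathbb I_n^{\mathcal J})$ is structurally observable at time $K$ if and only if $\text{g-rank}(\mathcal O_K^f)=n$, the generic rank being taken over the admissible parametrizations (same zero/non-zero pattern as $A$). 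This converts the problem into a purely combinatorial generic-rank question.

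Next I would show that $\text{g-rank}(\mathcal O_K^f)=n$ is equivalent to the two stated conditions. The matrix $\mathcal O_K^f$ is built by stacking the output-weighted blocks $CG_i$, which is precisely the data on which the switching-system observability test of Theorem~\ref{necSufCondStructSwitching} operates once each $A_i$ is identified with a $G_i$. Accordingly I would apply that theorem with the substitution $A_i\mapsto G_i$: condition (ii) becomes $\text{g-rank}([G_0,\ldots,G_K,\mathbb I_n^{\mathcal J}])=n$, which by Lemma~\ref{lemma1} is the existence of a size-$n$ maximum matching in the associated bipartite graph; condition (i) becomes the accessibility requirement on $\mathcal D(G_0\lor\cdots\lor G_K,\mathbb I_n^{\mathcal J})$, which by Lemma~\ref{lemma2} and Definition~\ref{linkedSCC} is equivalent to every non-bottom linked SCC of $\mathcal D(G_0\lor\cdots\lor G_K)$ reaching an output. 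The decomposition of ``full generic column rank of the stacked observability matrix'' into an accessibility part plus a one-step generic-rank part is exactly the content transported from Theorem~\ref{necSufCondStructSwitching}. I would also fix the measurement-window convention (the observability matrix of the definition ranges over $CG_0,\ldots,CG_{K-1}$, whereas the statement lists $G_0,\ldots,G_K$); this is a labeling choice for ``observability at time $K$'' that leaves the structural test unchanged.

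The hard part, and the step where the argument genuinely departs from the switching case, is justifying that we may treat $G_0,\ldots,G_K$ as if they were \emph{independently} parametrized structured matrices, as Theorem~\ref{necSufCondStructSwitching} implicitly requires. Here the $G_i$ are \emph{not} free: by the closed form in~\eqref{fracDynSol} each $G_i$ is a fixed polynomial in the entries of $A$ and in the diagonal matrices $A_j$, so the entries across different $G_i$ are algebraically linked. To close this gap I would argue at the level of maximal minors: each $n\times n$ minor of $\mathcal O_K^f$ is a polynomial in the free entries of $A$ and in the binomial coefficients $\binom{\alpha_i}{j}$, and I would show that whenever conditions (i)--(ii) hold this polynomial is \emph{not identically zero}. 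The two facts that make this work are that $G_0=A$ carries the entire sparsity pattern of $A$ (so the first block already exposes all one-step couplings and can be assigned generic values), and that $\binom{\alpha_i}{j}\neq 0$ for a non-integer $\alpha_i$ (so no coupling induced by the fractional-memory terms is structurally annihilated). With a non-vanishing minor polynomial in hand, the genericity conclusion of \cite{Reinschke:1988} gives $\text{rank}(\mathcal O_K^f)=n$ for almost all admissible $A$; the converse follows because violating either condition forces every such minor to vanish identically, mirroring the necessity direction of Theorem~\ref{necSufCondStructSwitching}. Verifying this vanishing/non-vanishing dichotomy under the algebraic constraints among the $G_i$ is the technical core of the proof.
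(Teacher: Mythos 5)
Your proposal takes essentially the same route as the paper's proof: set $A_{\sigma(i)}=G_{i-1}$ and $C=\mathbb{I}_n^{\mathcal J}$, equate the generic rank of the fractional observability matrix $\mathcal O^f$ with that of the switching-system observability matrix $\mathcal O^s$, and then invoke Theorem~\ref{necSufCondStructSwitching} (with Lemmas~\ref{lemma1} and~\ref{lemma2} supplying the graph-theoretic reformulation), including your correct observation about the $G_0,\ldots,G_K$ indexing convention. The ``hard part'' you isolate --- that the $G_i$ are algebraically dependent polynomials in the entries of $A$ and the coefficients $\binom{\alpha_i}{j}$, so one must verify the maximal minors are not identically zero rather than treating the $G_i$ as independently parametrized --- is precisely the step the paper discharges by asserting the rank equivalence follows ``the same steps as in Theorem 4 in~\cite{Liu20133531}'' (where products $A_{i_1}\cdots A_{i_k}$ raise the same dependence issue), so your sketched minor-polynomial argument, resting on $G_0=A$ carrying the full sparsity pattern and $\binom{\alpha_i}{j}\neq 0$ for non-integer $\alpha_i$, is a reconstruction of that cited step rather than a genuinely different method.
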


\begin{proof}
First, we notice that a  linear continuous-time switching system \eqref{timeVarSys}-\eqref{outputContinuous} is structurally observable if the  observability matrix  associated with it given by
\begin{align}\notag
&\mathcal O_k^s=[C^{\intercal} \ (CA_1)^{\intercal} \ \ldots \ (CA_m)^{\intercal} \ (CA_1A_2)^{\intercal} \ (CA_2A_1)^{\intercal}  \\ 
 & \ldots  (CA_{\rho(1)}^{i_1}\ldots A_{\rho(m)}^{i_m})^{\intercal}   \ldots (CA_1^{n-1})^{\intercal} \ldots (CA_m^{n-1})^{\intercal} ]^{\intercal},
\notag \end{align}
has generic rank equal to $n$. From an algebraic point of view, if we set $A_{\sigma (i)}=G_{i-1}$ (with $i=1,\ldots,K+1$) and $C=\mathbb{I}^{\mathcal J}_n$. Then, following the same steps as in Theorem 4 in~\cite{Liu20133531}, we obtain that the generic rank of $\mathcal O^{s}_{K+1}$ equals  that of $\mathcal O^{f}_{K}$. Consequently, conditions (i)-(ii) follow by invoking Theorem~\ref{necSufCondStructSwitching}.\end{proof}

As a direct consequence of Theorem~\ref{fracToSwitch}, by invoking Lemma~\ref{lemma1} and Lemma~\ref{lemma2}, we obtain the following result.

 \begin{corollary}
A fractional-order dynamical system~\eqref{fracDyn}-\eqref{outputDiscrete} is structurally observable at time $K$ if and only if the following two conditions hold:
\begin{enumerate}
\item[(i)] there exits an edge to an output vertex in $\mathcal D(G_0\lor G_1\lor \ldots \lor G_K, \mathbb{I}^{\mathcal J}_n)$ from a state vertex in each non-bottom linked SCC of the DAG associated with  $\mathcal D(G_0\lor G_1\lor \ldots \lor G_K)$;
\item[(ii)] there exists a maximum matching of $\mathcal B([ G_0\lor G_1\lor \ldots \lor G_K,\mathbb{I}_n^{\mathcal J}])$ with size $n$,
\end{enumerate}
where $G_i$ is as described in~\eqref{fracDynSol}.  \hfill $\diamond$ 
\label{CorollaryfracToSwitch}
 \end{corollary}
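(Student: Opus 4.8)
The plan is to read Corollary~\ref{CorollaryfracToSwitch} as a purely combinatorial restatement of the two analytic conditions supplied by Theorem~\ref{fracToSwitch}, so I would begin by invoking that theorem to replace ``structurally observable at time $K$'' with the pair of conditions (i) $\mathcal D(G_0\lor\cdots\lor G_K,\mathbb{I}_n^{\mathcal J})$ has no non-accessible state vertex, and (ii) $\text{g-rank}([G_0,\ldots,G_K,\mathbb{I}_n^{\mathcal J}])=n$. The two items of the corollary are then obtained by rewriting (i) and (ii) one at a time, each through one of the intermediate lemmas.

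For item (i) I would apply Lemma~\ref{lemma2} verbatim, with the matrix $A_1\lor\cdots\lor A_m$ appearing there played by $G_0\lor\cdots\lor G_K$. Since Lemma~\ref{lemma2} is a statement about an arbitrary structured matrix and its associated DAG, the substitution is immediate: ``no non-accessible state vertex'' is equivalent to the existence of an edge into an output vertex from a state vertex in each non-bottom-linked SCC of the DAG of $\mathcal D(G_0\lor\cdots\lor G_K)$. I expect this step to be routine.

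For item (ii) I would invoke Lemma~\ref{lemma1}, which equates $\text{g-rank}(M)=n$ with the existence of a size-$n$ matching of $\mathcal B(M)$. The delicate point is that Theorem~\ref{fracToSwitch} phrases the rank condition on the \emph{concatenation} $[G_0,\ldots,G_K,\mathbb{I}_n^{\mathcal J}]$, whereas the corollary asks for a size-$n$ matching of $\mathcal B([G_0\lor\cdots\lor G_K,\mathbb{I}_n^{\mathcal J}])$, i.e.\ on the entrywise \emph{disjunction} of the state blocks. One inclusion is easy: any size-$n$ matching of the disjunction lifts to one of the concatenation, because a nonzero entry of $G_0\lor\cdots\lor G_K$ witnesses a nonzero entry in some block $G_\ell$, and distinct column indices yield distinct concatenation columns; hence $\text{g-rank}([G_0\lor\cdots\lor G_K,\mathbb{I}_n^{\mathcal J}])\le \text{g-rank}([G_0,\ldots,G_K,\mathbb{I}_n^{\mathcal J}])$.

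The main obstacle is the reverse inequality: collapsing the $K+1$ state blocks into their disjunction must not decrease the generic rank, i.e.\ the multiplicity of a column index across distinct blocks cannot be essential to reaching rank $n$. This is false for arbitrary matrix tuples (two blocks can place nonzeros in different rows of a common column index, so the concatenation offers two independently usable columns while the disjunction offers only one), so the argument must exploit the specific support structure of the $G_i$ generated by the recursion in~\eqref{fracDynSol} --- in particular the fact that the off-diagonal support of each $G_\ell$ is governed by walks in $\mathcal D(A)$ and that the coefficient matrices $A_j$ ($j\ge 1$) are diagonal. Concretely, I would try to show that every row index matchable in the concatenation is already matchable, through a \emph{distinct} column index, in the disjunction, so that $\mathcal B([G_0\lor\cdots\lor G_K,\mathbb{I}_n^{\mathcal J}])$ attains a size-$n$ matching whenever the concatenation does. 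Establishing this consolidation/reachability statement rigorously is where I expect the real work to lie; once it is in place, Lemma~\ref{lemma1} delivers item (ii) and the corollary follows.
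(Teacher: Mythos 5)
Your overall route is exactly the paper's: the paper gives no separate proof of Corollary~\ref{CorollaryfracToSwitch}, asserting it as a ``direct consequence'' of Theorem~\ref{fracToSwitch} combined with Lemma~\ref{lemma2} (for item (i)) and Lemma~\ref{lemma1} (for item (ii)). Your handling of item (i) --- substituting $G_0\lor\cdots\lor G_K$ for $A_1\lor\cdots\lor A_m$ in Lemma~\ref{lemma2} --- is precisely the intended step and is correct, as is your easy direction for item (ii): any size-$n$ matching of $\mathcal B([G_0\lor\cdots\lor G_K,\mathbb{I}_n^{\mathcal J}])$ lifts to one of $\mathcal B([G_0,\ldots,G_K,\mathbb{I}_n^{\mathcal J}])$.

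The concatenation-versus-disjunction mismatch you isolate is, however, a genuine gap --- one the paper silently elides rather than resolves. Lemma~\ref{lemma1} applied to Theorem~\ref{fracToSwitch}(ii) yields a matching condition on the \emph{concatenation} $\mathcal B([G_0,\ldots,G_K,\mathbb{I}_n^{\mathcal J}])$, not on the disjunction stated in the corollary; tellingly, Step~2 of Algorithm~\ref{mainAlgorithmNew} and the correctness proof of Theorem~\ref{TheoremCorrectness} work with the concatenation $[G_0^\intercal,\ldots,G_K^\intercal,S]$, so the paper itself oscillates between the two objects as if they were interchangeable. Worse, the consolidation lemma you propose to prove (``every row matchable in the concatenation is matchable through a distinct column in the disjunction'') is \emph{false} in the free-parameter structural setting in which Theorem~\ref{fracToSwitch} is actually established via the switching-system reduction, even for the specific patterns generated by~\eqref{fracDynSol}. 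Take $\bar A$ with support $\{(1,1),(2,1)\}$, i.e., a self-loop at $x^1$ plus an edge $x^1\to x^2$: then $G_0=A$ and $G_1=A^2$ both have column-$1$ support $\{1,2\}$ and all other columns zero, so the concatenation admits a size-$2$ matching (using the two copies of column $1$), while the disjunction has g-rank $1$; no rerouting through walk structure is available because no other column is nonzero. What saves the disjunction form in this example is not combinatorics but the algebraic dependence among the blocks --- all $G_k$ are polynomial in the same $A$, so here the column space of $G_1$ lies inside that of $G_0$ and the true rank of $[G_0,G_1]$ collapses to $1$, agreeing with the disjunction. Neither the paper nor your proposal supplies an argument at that level; the clean repairs are either to restate item (ii) on the concatenation (making the corollary a literal application of Lemma~\ref{lemma1}, consistent with Algorithm~\ref{mainAlgorithmNew}) or to prove the pattern-specific equivalence under the shared-parameter constraints, which is genuinely beyond Lemma~\ref{lemma1} --- so your instinct about where the real work lies was right, but the particular support-structure strategy you sketch cannot close it.
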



\begin{algorithm}[h]
\small
\KwIn{$G_i, \text{ with } i=1,\ldots,K $}
\KwOut{Minimal  dedicated output placement $\mathbb{I}^{\mathcal J}_n$}

$\mathbf{Step  \ 1.}$ Determine the  non-bottom linked SCCs  $\mathcal{N}^T_i$, $i\in \mathcal I\equiv\{1,\cdots, \beta\}$, of $\mathcal D(G_0\lor \ldots \lor G_K)=(\mathcal X,\mathcal E_{\mathcal X,\mathcal X})$ 

$\mathbf{Step  \ 2.}$ Consider a weighted bipartite graph $\mathcal B([ G_0^\intercal , \ldots, G_K^\intercal,S])=(\mathcal R,\mathcal C,\mathcal E_{\mathcal R,\mathcal C})$, where $S$ is a $n\times \beta$ matrix and $S_{i,j}=1$ if $x_i \in \mathcal N^T_j$, and  the column vertices be re-labeled as follows: the columns of $A_i$ are indexed by $\{c^i_1,\ldots, c^i_n\}$, and the columns of $S$ are indexed by $\{s_1,\ldots, s_{\beta}\}$. In addition, let the weight of the edges  $e\in \mathcal R\times\left(\bigcup\limits_{i=1,\ldots,m} \{c^i_1,\ldots, c^i_n\}\right)$ be equal to zero, the weight on the edges  $e\in \mathcal R\times\{s_1,\ldots, s_{\beta}\}$ be equal to one, and all non-existing edges corresponding to infinite weight.

$\mathbf{Step  \ 3.}$ Let $M'$ be the maximum matching incurring in the minimum cost of the weighted bipartite graph presented in Step 2. 

$\mathbf{Step  \ 4.}$ Take $\mathcal J'=\{i: (.,s_i)\in M'\}$, i.e., the column vertices of $S$  that belong to the edges in the  MWMM $M'$ (i.e., those with weight one). In addition, let $\mathcal J''=\{1,\ldots,n\}\setminus \{k: (.,c_j^k)\in M', j=1,\ldots, n\}$, and $\mathcal J'''= \bigcup\limits_{p\in \{1,\ldots,\beta\}\setminus \mathcal J'}\min \{j:  x_j\in \mathcal N^T_p\}$.

$\mathbf{Step  \ 5.}$ Set $\mathcal J=\mathcal J'\cup\mathcal J''\cup \mathcal J'''$.

\caption{Computing a minimal dedicated output placement ensuring structural observability}
\label{mainAlgorithmNew}
\end{algorithm}

Now, we   provide an efficient algorithmic procedure to compute a solution to $\mathcal P_1$,  described in Algorithm~\ref{mainAlgorithmNew}, that ensures that both conditions in Corollary~\ref{CorollaryfracToSwitch} are satisfied.  Briefly, Algorithm~\ref{mainAlgorithmNew} consists in finding a minimum weight maximum matching (MWMM) of a bipartite graph  $\mathcal B([ G_0^\intercal , \ldots, G_K^\intercal,S])$, where the matrix $S$ has as many columns as the number of non-bottom linked SCCs in the DAG representation of $\mathcal D(G_0\lor \ldots \lor G_K)$, and the non-zero entries in  column $i$ of $S$ correspond to the indices of the state variables that belong to the $i$-th non-bottom linked SCC. In addition, we consider weights in the edges of the bipartite graph: those associated with the nonzero entries of $A_i$ have zero weight, unitary weight is considered to the nonzero entries in $S$, and infinite weight otherwise.

Therefore, if an edge with unitary weight belongs to the MWMM, then it contains a row vertex, which implies that dedicated sensors need to be assigned to the state variable with  the index of the row vertex, whereas all the remaining unitary edges not in the MWMM mean that these cannot be used to further increase the cardinality of the matching. Yet,  notice that $\mathcal B([G_0^\intercal , \ldots, G_K^\intercal,\mathbb{I}^{\mathcal J'}_n])$ may not have a maximum matching of size $n$; hence, additional dedicated sensors indexed by $\mathcal J''$  need to be considered, corresponding to the indices of the row vertices that are unmatched (i.e., the left-unmatched vertices of $\mathcal B([ G_0^\intercal , \ldots, G_K^\intercal,\mathbb{I}^{\mathcal J'}_n])$ associated with the MWMM). In summary, the indices in $\mathcal J'$ correspond to the sensors that increase the g-rank and minimize the number of non-accessible state vertices, since they are assigned to non-bottom linked SCCs of $\mathcal D(G_0\lor \ldots \lor G_K)$, whereas $\mathcal J''$ ensures that the g-rank is equal to $n$. Thus, the last step consists in considering  an additional subset of dedicated sensors $\mathcal J'''$  to ensure that there are no non-accessible state vertices in $\mathcal D(G_0\lor \ldots \lor G_K, \mathbb{I}^{\mathcal J}_n)$, where $\mathcal J=\mathcal J'\cup\mathcal J''\cup \mathcal J'''$; in  other words, $\mathcal J'''$ consists of indices of the minimum  dedicated sensors  assigned to a state variable in each non-bottom linked SCC that contains non-accessible state vertices.

The  next result establishes  the correctness and analyzes the implementation complexity  of Algorithm~\ref{mainAlgorithmNew}.

\begin{theorem}
Algorithm~\ref{mainAlgorithmNew} is correct, i.e., it provides a solution to $\mathcal P_1$. Furthermore, its computational complexity is  $\mathcal O(|\mathcal C|^{3})$, where $|\mathcal C|$ denotes the number of column vertices in $\mathcal B([G_0^\intercal , \ldots, G_K^\intercal,S])$. 
\hfill $\diamond$
\label{TheoremCorrectness}
\end{theorem}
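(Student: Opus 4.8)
The plan is to decompose the statement into feasibility of the returned set, its optimality (minimum cardinality), and the running-time bound, using Corollary~\ref{CorollaryfracToSwitch} to replace structural observability of~\eqref{fracDyn}--\eqref{outputDiscrete} by its two combinatorial surrogates on $G_0,\ldots,G_K$: (i) every non-bottom linked SCC of $\mathcal D(G_0\lor\ldots\lor G_K)$ must contain a state incident to an output edge, and (ii) the bipartite graph built in Step~2 must admit a matching of size $n$ once the chosen sensors are installed.

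First I would prove that $\mathcal J=\mathcal J'\cup\mathcal J''\cup\mathcal J'''$ is feasible. For requirement (ii), $\mathcal J''$ is by construction the set of state rows not matched to any $G$-column by the minimum weight maximum matching $M'$; installing a dedicated sensor at each such state adds an identity column $e_j$ that matches row $r_j$, so these identity edges together with the $G$-edges of $M'$ saturate all $n$ rows and Lemma~\ref{lemma1} yields g-rank $n$. For requirement (i), each unit-weight edge $(r_j,s_i)\in M'$ can occur only when $x_j\in\mathcal N_i^T$, so the sensor placed at that row (it already belongs to $\mathcal J''$) covers SCC~$i$; the indices recorded in $\mathcal J'$ are precisely these covered SCCs, and $\mathcal J'''$ adds one sensor in every remaining non-bottom SCC. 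A short argument using the maximality of $M'$ shows that a state lying in a non-bottom SCC and left totally unmatched would yield an augmenting unit-weight edge, contradicting maximality; hence no non-bottom SCC is missed, all $\beta$ of them are covered, and Lemma~\ref{lemma2} guarantees no non-accessible state vertex, so both conditions of Corollary~\ref{CorollaryfracToSwitch} hold.

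The step I expect to be the main obstacle is optimality. I would first obtain the exact count $|\mathcal J|=n+\beta-m^\ast$, where $m^\ast$ is the size of a maximum matching of $\mathcal B([G_0^\intercal,\ldots,G_K^\intercal,S])$ and $\beta$ the number of non-bottom linked SCCs: splitting $m^\ast$ into its $g'$ zero-weight and $t'=m^\ast-g'$ unit-weight edges gives $|\mathcal J''|=n-g'$ and $|\mathcal J'''|=\beta-t'$, and a maximality argument shows the min-index representatives chosen for $\mathcal J'''$ are $G$-matched, hence disjoint from $\mathcal J''$, so $|\mathcal J|=n+\beta-(g'+t')=n+\beta-m^\ast$; the minimum-weight criterion only fixes a clean split (it maximizes $g'$, isolating the pure rank deficiency in $\mathcal J''$) and does not alter the cardinality. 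The crux is the matching lower bound $|\mathcal J^\ast|\ge n+\beta-m^\ast$ for every feasible $\mathcal J^\ast$, which I would establish by turning such a placement into a matching of $\mathcal B([G_0^\intercal,\ldots,G_K^\intercal,S])$: a size-$n$ matching supplied by requirement (ii) matches $n-s$ rows through $G$-columns and $s\le|\mathcal J^\ast|$ rows through identity columns, and each identity-matched row whose state lies in a non-bottom SCC is rerouted, one per SCC, to the corresponding $S$-column; requirement (i) ensures every SCC is hit by $\mathcal J^\ast$, and counting the sensors not reused in the rerouting produces the bound. The delicate point is that several identity-matched rows may fall in the same SCC and collapse onto a single $S$-column, so the reroute cannot be carried out edge-by-edge; this is exactly where a defect (K\"onig/Hall) argument is needed, paralleling the dedicated-output optimality proof for linear time-invariant systems in~\cite{PequitoJournal,PequitoACC}, now legitimately transplanted to the matrices $G_0,\ldots,G_K$ via Theorem~\ref{fracToSwitch}. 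Combining the exact count with the lower bound, and noting that the MWMM attains the maximum $m^\ast$, certifies that $\mathcal J$ solves~\eqref{optProbP2}.

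Finally, for the complexity I would bound each step separately. Step~1 extracts the SCCs and builds the DAG in $\mathcal O(|\mathcal V|+|\mathcal E|)$ by Tarjan's algorithm~\cite{Cormen}; Steps~2, 4 and~5 are linear in the size of the bipartite graph; and the dominant cost is Step~3, the minimum weight maximum matching, which the Hungarian (assignment) algorithm solves in $\mathcal O(|\mathcal C|^{3})$, where $|\mathcal C|$ is the number of column vertices of $\mathcal B([G_0^\intercal,\ldots,G_K^\intercal,S])$. Since $|\mathcal C|\ge|\mathcal R|=n$ and $|\mathcal C|^{3}$ dominates the $\mathcal O(n+|\mathcal E|)$ preprocessing, the overall running time is $\mathcal O(|\mathcal C|^{3})$, as claimed.
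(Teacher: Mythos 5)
Your proposal is correct and, for two of its three parts (feasibility via Corollary~\ref{CorollaryfracToSwitch} with Lemmas~\ref{lemma1} and~\ref{lemma2}, and the complexity bound via the Hungarian algorithm dominating the linear-time SCC/DAG and bookkeeping steps), it coincides with the paper's own argument. Where you genuinely diverge is on optimality: the paper argues locally and rather informally --- it asserts that $\mathcal J'$ simultaneously maximizes the g-rank increase and the coverage of non-bottom linked SCCs, and that the cardinalities of $\mathcal J''$ and $\mathcal J'''$ ``are minimized by considering Step~4,'' without ever producing a lower bound valid for an arbitrary feasible placement. You instead prove a global certificate: the closed-form count $|\mathcal J|=n+\beta-m^{\ast}$ for the algorithm's output (including the maximality argument showing the $\mathcal J'''$ representatives are $G$-matched, hence disjoint from $\mathcal J''$ --- a detail the paper never checks), matched against the lower bound $|\mathcal J^{\ast}|\ge n+\beta-m^{\ast}$ for every feasible $\mathcal J^{\ast}$ obtained by converting a feasible placement into a matching of $\mathcal B([G_0^{\intercal},\ldots,G_K^{\intercal},S])$. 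This buys a genuinely rigorous minimality proof where the paper offers only a plausibility argument, at the cost of more machinery. One simplification: the ``delicate'' collapse issue you flag does not actually require a K\"onig/Hall defect argument. Pick a single representative $j_p\in\mathcal J^{\ast}$ in each non-bottom SCC $\mathcal N^T_p$ (condition (i) guarantees one exists, and the $j_p$ are distinct since SCCs are disjoint); keep the $G$-edges of the size-$n$ matching, drop all identity edges, and for each $p$ add $(r_{j_p},s_p)$, removing $r_{j_p}$'s $G$-edge if it had one. With $s$ identity-matched rows and $b$ representatives that were $G$-matched, this yields a matching of size $n-s-b+\beta$, and since the $s$ identity-matched states and the $b$ $G$-matched representatives are disjoint subsets of $\mathcal J^{\ast}$, one gets $m^{\ast}\ge n+\beta-(s+b)\ge n+\beta-|\mathcal J^{\ast}|$ directly; surplus sensors in the same SCC simply slacken the inequality rather than break the rerouting. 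With that substitution your outline closes completely and is strictly more detailed than the published proof.
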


\begin{proof}
The correctness of Algorithm~\ref{mainAlgorithmNew} follows from noticing that the indices in $\mathcal J'$ identifies the minimum set of dedicated outputs that  simultaneously  maximizes the increase in the g-rank of   $[G_0^\intercal , \ldots, G_K^\intercal,\mathbb{I}_n^{\mathcal J'}]$ with respect to $[G_0^\intercal , \ldots, G_K^\intercal]$ by $|\mathcal J'|$,  and  the dedicated outputs assigned to state variables in different non-bottom linked SCCs.  
This follows from observing  that (by construction) $\mathcal B([G_0^\intercal , \ldots, G_K^\intercal])$ incurs in a minimum weight maximum matching $M$ with zero weight and size $|M|$. From Lemma~\ref{lemma1}, it follows that $\text{g-rank}([G_0^\intercal , \ldots, G_K^\intercal])=|M|$. Subsequently, a minimum weight maximum matching $M'$ of $\mathcal B([G_0^\intercal , \ldots, G_K^\intercal, S])$ equals $|M'|-|M|$; hence, increasing by $|M'|-|M|$ the g-rank of  $[G_0^\intercal , \ldots, G_K^\intercal,\mathbb{I}_n^{\mathcal J'}]$ with respect to $[G_0^\intercal , \ldots, G_K^\intercal]$, and contributing to satisfy condition (ii) in Corollary~\ref{CorollaryfracToSwitch} -- but may not be enough yet to ensure this condition, which is accounted for with the set $\mathcal J''$. In addition, by construction of $S$ it follows that $\mathbb{I}_n^{\mathcal J'}$ correspond to dedicated outputs that are assigned to state variables in different non-bottom linked SCCs; hence, $|\mathcal J'|$ non-bottom linked SCCs have outgoing edges from its state variables in different outputs in the system digraph; thus,  contributing to satisfy condition (i) in Corollary~\ref{CorollaryfracToSwitch} -- but may not be enough yet to ensure this condition, which is accounted for with the set $\mathcal J'''$.  Subsequently, the number of total number of additional dedicated outputs $\mathbb{I}_n^{\mathcal J''}$ required  such that $\text{g-rank}\left([G_0^\intercal , \ldots, G_K^\intercal,\mathbb{I}_n^{\mathcal J'},\mathbb{I}_n^{\mathcal J''}]\right)=n$ is minimized by considering Step~4. Similarly, the number of total number of additional dedicated outputs $\mathbb{I}_n^{\mathcal J'''}$ required  such that there exist no non-accessible state vertices in $\mathcal D(G_0 \lor \ldots \lor G_K, \mathbb{I}_n^{\mathcal J'\cup \mathcal J'''})$ is minimized by considering Step~4. Notice that $\mathbb{I}_n^{\mathcal J''}$ are not assigned to non-bottom linked SCCs, otherwise they would have been  considered in $\mathbb{I}_n^{\mathcal J'}$. Therefore, by setting $\mathcal J=\mathcal J'\cup\mathcal J''\cup\mathcal J'''$, as in Step~5, we obtain a solution to~$\mathcal P_1$, since  Corollary~\ref{CorollaryfracToSwitch} yields.

The computational complexity follows from noticing that Step 2 can be solved using the Hungarian algorithm that finds a MWMM in $\mathcal O(\max \{|\mathcal C|,|\mathcal R|\}^3)$, whereas all other steps have linear complexity; hence, Step 2 dominates the final computational complexity, leading to the final complexity of $\mathcal O(|\mathcal C|^3)$ obtained, where we note that $|\mathcal C|\ge |\mathcal R|$.
\end{proof}

\section{AN ILLUSTRATIVE EXAMPLE}\label{illustrativeexample}

In what follows, we consider a brain activity dataset \cite{BrainData} describing the dynamics of the brain while the individuals perform different motor and imagery tasks. The brain activity was measured using the brain-computer interface and consists of 64-channel electroencephalogram (EEG). Each individual performed 14 experimental runs consisting of one minute with eyes open, one minute with eyes closed, and three two-minute runs of interacting sessions of opening/closing the corresponding left/right fist as a function of where the targets appear on screens.  To this end, a geodesic sensor net for EEG was considered  (illustrated in Figure~\ref{f:Helmet}), which sensor location schematics and enumeration is presented in Figure~\ref{f:HelmetSensors}.


\begin{figure}[h!]
\centerline{\includegraphics[scale=0.4]{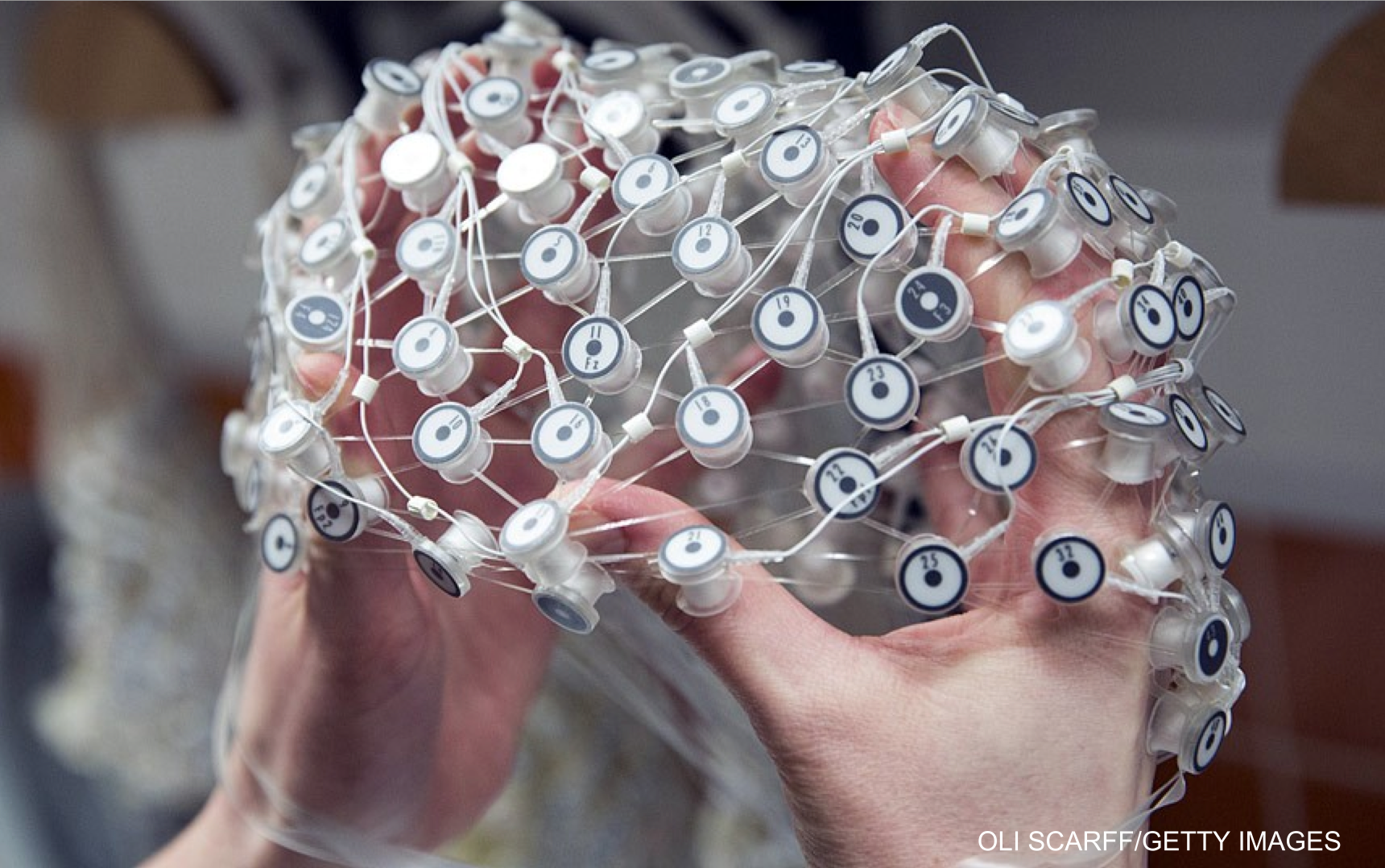}}
\caption{64-channel geodesic sensor net for an EEG.}
\label{f:Helmet}
\end{figure}

\begin{figure}[h!]
\centerline{\includegraphics[scale=0.5]{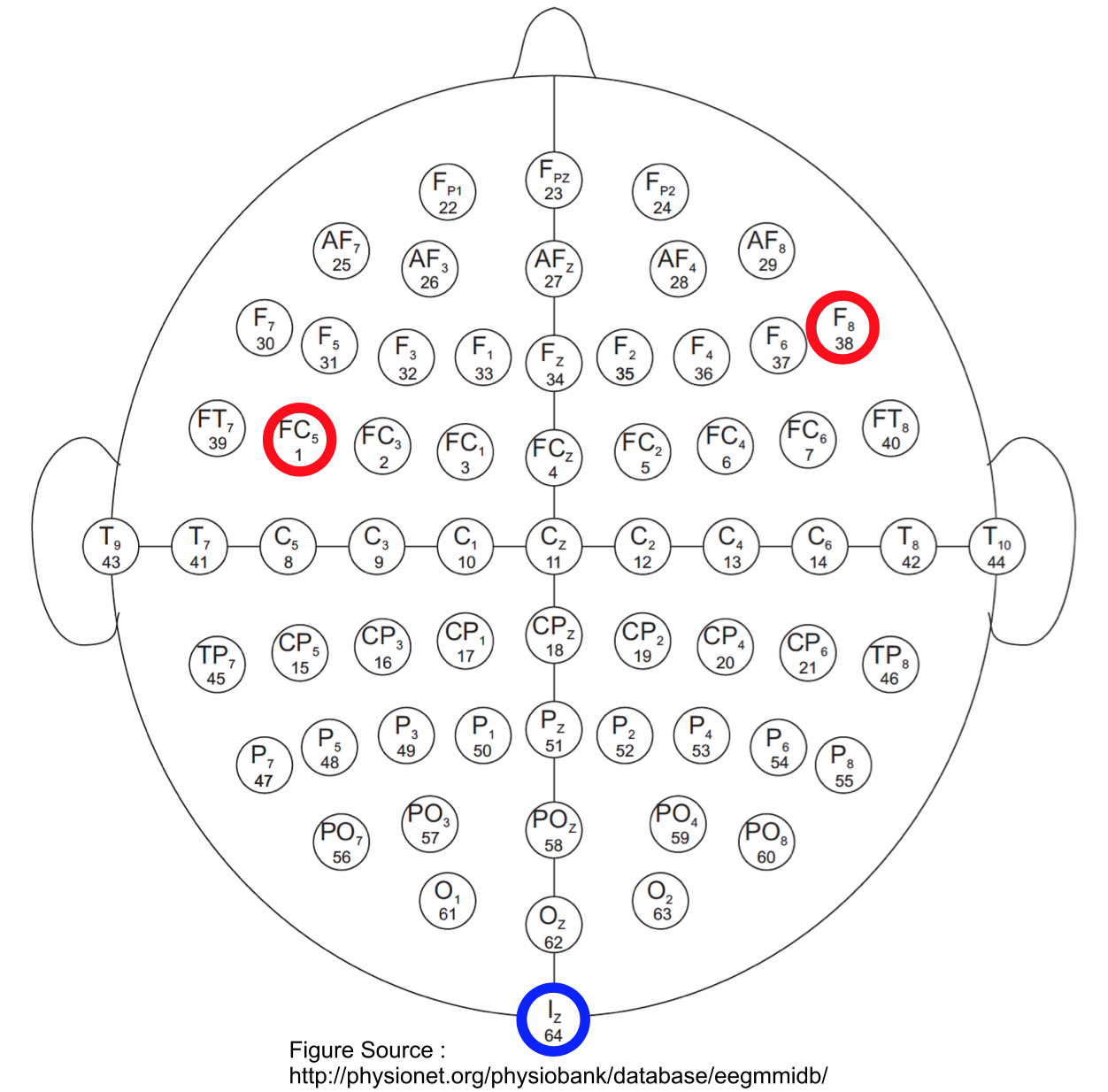}}
\caption{Sensor distribution in the 64-channel geodesic sensor net for an EEG depicted in Figure~\ref{f:Helmet}. The sensor in blue represents the sensor whose neuronal activity is simulated using the fractional order system identified, and compared with data recorded in Figure~\ref{f:sim}. The sensors in red represent the minimum number of sensors required to ensure structural observability of a fractional order system identified, and considering a sparsity of $80\%$.}
\label{f:HelmetSensors}
\end{figure}

Using the wavelet technique described in \cite{Achard}, we estimated the parameters $\alpha_{j}$ ($j=1,\ldots,64$) in~\eqref{fracDyn}, corresponding to the fractional order derivatives governing the dynamics of the monitoring neuronal regions and the coupling matrix $A$ that encapsulates the spatial fractal connectivity among different brain regions. To give some intuition of how spread the parameters $\alpha_{j}$ corresponding to the fractional order derivatives are, the estimated values range between $0.97$ and $1.28$. Hence, providing evidence of the fractional order dynamics; more precisely, some brain regions exhibit pronounced long-range memory dynamics which can be better captured by fractional order state space representation.   In Figure~\ref{f:sim}, we contrast  the recorded and simulated data on channel 64 (depicted in blue in Figure~\ref{f:HelmetSensors}), during the first two-minutes run of interacting sessions of opening/closing the corresponding left/right fist as a function of where the targets appear on screens. 


\begin{figure}[h!]
\centerline{\includegraphics[width=5.5in,keepaspectratio]{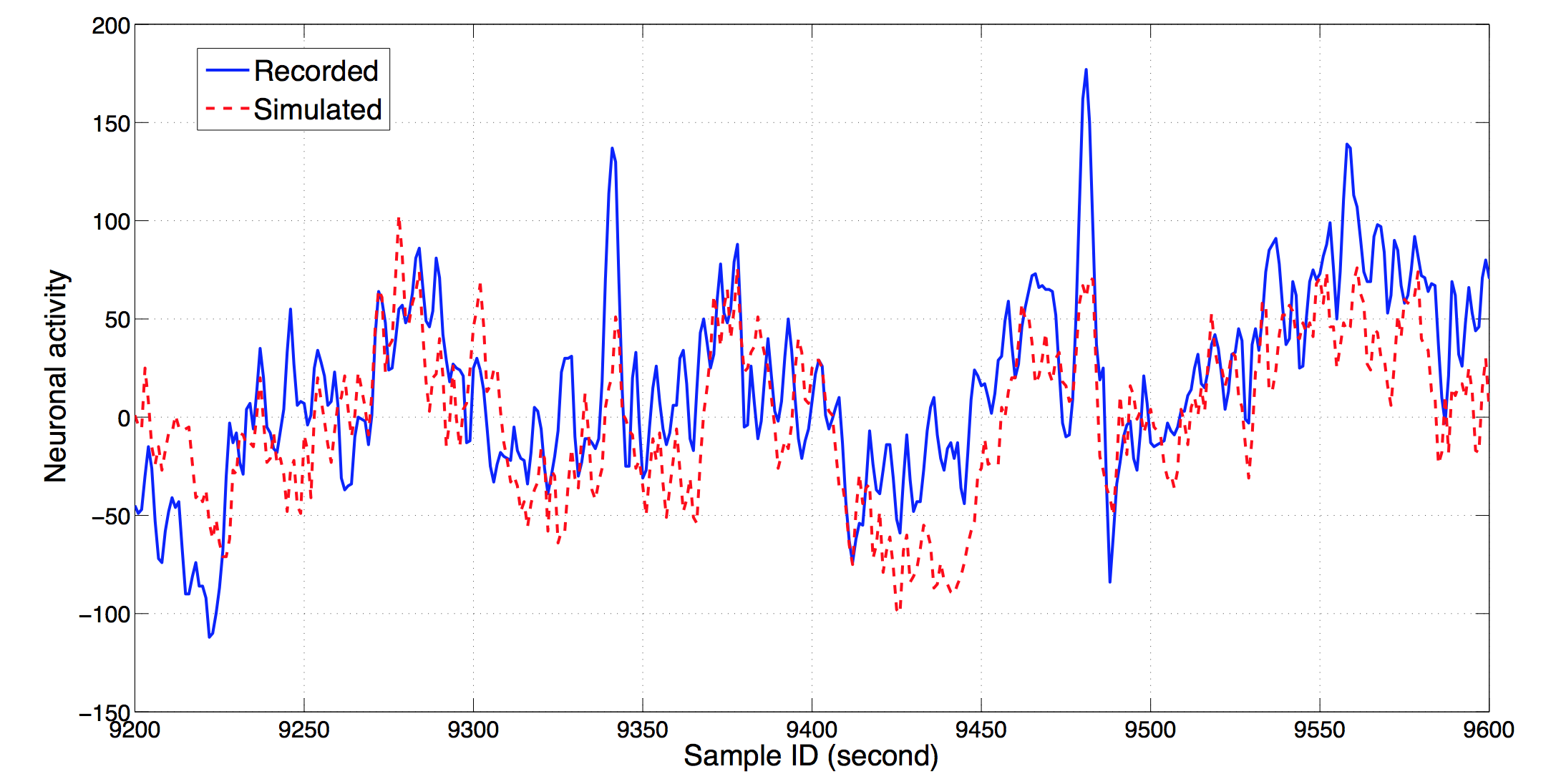}}
\caption{In this figure, we show the recorded and simulated data on channel 64 (depicted in blue in Figure~\ref{f:HelmetSensors}), where a sample is taken every second, during the first two-minutes run of interacting sessions of opening/closing the corresponding left/right fist as a function of where the targets appear on screens.  }
\label{f:sim}
\end{figure}

\begin{figure}[h!]
\centerline{\includegraphics[width=5.5in,keepaspectratio]{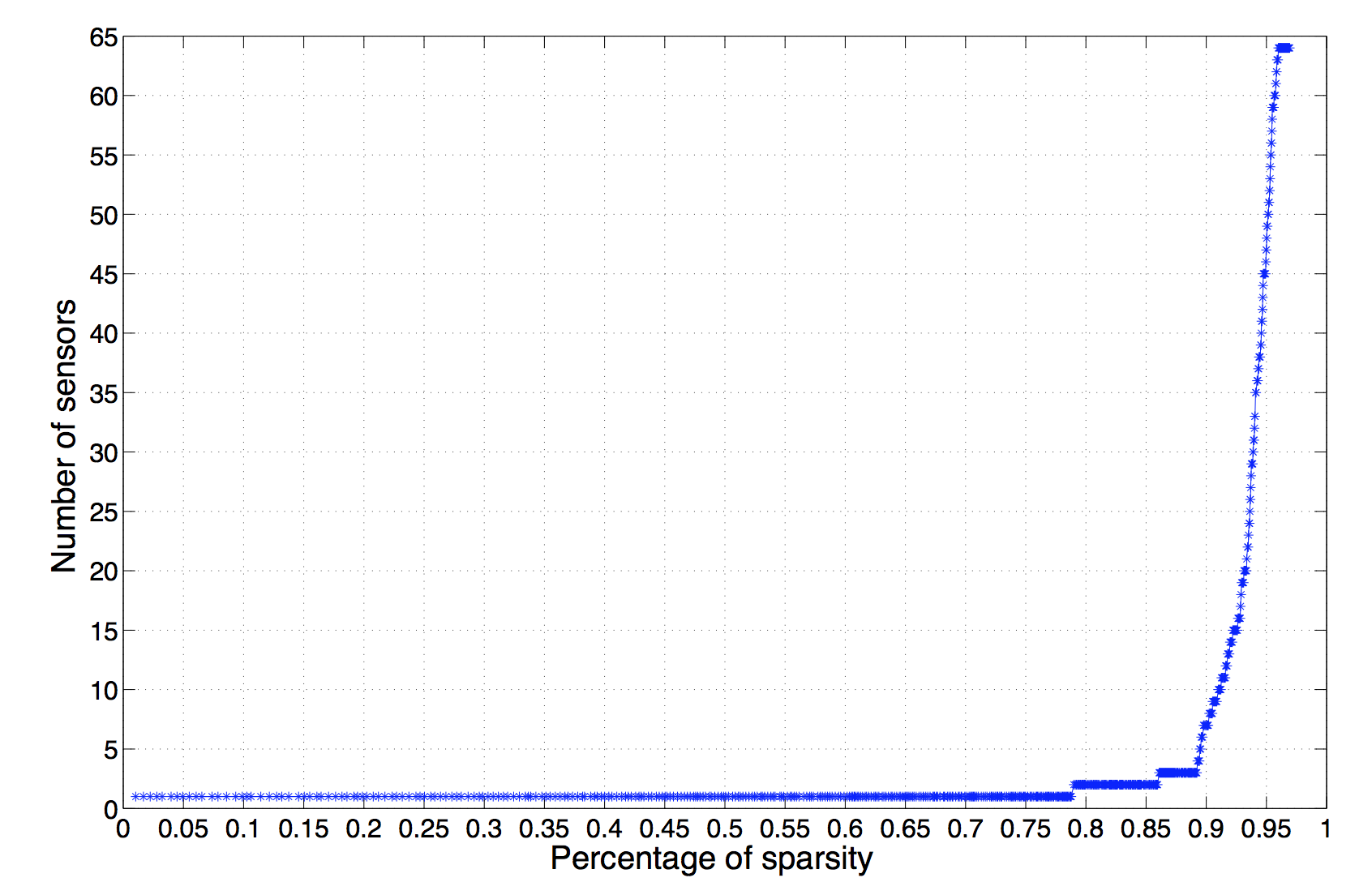}}
\caption{Number of required sensors to ensure structural observability as a function of the sparsity.}
\label{f:Sparsity}
\end{figure}

A common approach in determining the brain connectivity is to consider the correlations between different measured signals~\cite{BassettMethods}. In contrast, we advocate for a structural approach of the signal dynamics induced by the sparsification of the coupling matrix $A$. This structural analysis offers a robust mathematical framework for capturing both the spatial and temporal interactions among distant brain regions and identifying the minimum number of sensors that provide full observable description of brain dynamics. Following the theoretical results and analysis outlined in Corollary 2, by resorting to Algorithm~1, we determine the optimal number of sensors as a function of the sparsity present in the coupling matrix $A$ for a time horizon equal to the dimension of the state space. Figure~\ref{f:Sparsity} shows that the higher the sparsity of the coupling matrix $A$ of the brain dynamics is, the more sensors are required for observability purposes. An important observation one could make is that for the case when the system connectivity is within the range of $ 20 - 30 \%$ (which implies that the sparsity is within $70 - 80 \%$ interval), which is characteristic to structural/functional brain connectivity~\cite{Jirsa2007,Sporns:2010:NB:2024602,Sporns}, the required number of sensors exhibit a {\it first order phase transition}. Simply speaking, to monitor the brain activity when the connectivity is between $70 \%$ and $80 \%$ we only need 2 sensors, depicted in red in Figure~\ref{f:HelmetSensors}. However, when the connectivity is just $10 \%$ we are required to use 7 sensors and the required number of sensors spikes fast with increasing sparsity.

\section{CONCLUSIONS AND FURTHER RESEARCH}\label{conclusions}

In this paper, motivated by the problem of minimum placement of probes  in the brain to obtain dynamic observability,  we  presented methodologies to determine the minimum dedicated sensor placement for discrete time fractional-order systems. Further, the results presented can be used to determine the minimum dedicated input (i.e., inputs that actuate single state variables) placement for aforementioned  systems by invoking the duality between controllability and observability in the case of  discrete-time fractional-order systems. Finally, it would be interesting to generalize the present results for a larger class of systems, and validate the proposed methodology in different brain regions and using different technologies.

\footnotesize 


\bibliographystyle{IEEEtran}

\bibliography{IEEEabrv,cdc2015}

\end{document}